\newcommand{\enabstract}[1]{
%  \ifthenelse{\isundefined{\adm@journal}}{%
%      \selectlanguage{english}%
%      \noindent {\it \@enauthor} {\bf\MakeUppercase{\@entitle.}} #1\par\vskip 2mm%
%      \selectlanguage{russian}%
%  }{%
      \addtocontents{enabstr}{%
        \selectlanguage{english}%
        {\it \@enauthor} {\bf\MakeUppercase{\@entitle.}} #1\par\vskip 2mm%
        \selectlanguage{russian}%
      }
%  }
}
\newcommand{\Tr}[1]{{{\mathrm Tr}{#1}}}
\def\F{{\mathbb{F}}}
\newcommand{\Image}[1]{{{\mathrm{Im}}{#1}}}
\newtheorem{theorem}{Теорема}
\newtheorem{lemma}{Лемма}
\newtheorem{note}{Замечание}
\newenvironment{proof}{{$\rhd$}}{$\Box$\\}
\title{О решении одной задачи 
%криптоолимпиады 
олимпиады 
NSUCrypto'2017
и образах кубических отображений
в двоичных полях
}
\author{
%Алексей Чиликов
%Чиликов Алексей Анатольевич
Чиликов А. А.
\footnote{Московский Государственный Технический
Университет им. Н.Э. Баумана,
факультет Информатика и системы управления,
кафедра ИУ-8 Информационная безопасность
}
\footnote{Московский Физико-Технический Институт,
факультет инноваций и высоких технологий,
лаборатория продвинутой комбинаторики и сетевых приложений
}
\footnote{Passware, Research Department,
chilikov@passware.com
}
}
\begin{document}
\maketitle

%\tableofcontents
%
%\pagebreak

%\makeatletter
%\renewcommand{\@evenhead}{\hfil \copyright \Copyright. Revision \Revision}
%\renewcommand{\@oddhead}{\hfil \copyright \Copyright. Revision \Revision}
%\makeatother

\begin{abstract}
Задача об описании образа нелинейной функции $f(x) = x^3+x$ над произвольным
конечным полем характеристики $2$ была предложена на олимпиаде NSUCrypto
в 2017 году, и обозначена организаторами как нерешенная.
В данной работе предложено полное решение указанной задачи.
\end{abstract}

{\bf \large Ключевые слова: }
конечные поля, нелинейные отображения, криптография, теория кодирования.

{\bf УДК 519.7}
%\vspace{0.1cm}

\section{Введение}
\label{Intro}

Международная Олимпиада по криптографии NSUCrypto проводится с 2014 года,
и стала одним из интереснейших мероприятий в мире криптографии. Идея ее
появилась в Новосибирском Государственном Университете,
который и по сей день является главной площадкой и
организатором олимпиады. В состав жюри входят многие авторитетные
специалисты-криптографы из России, %а также 
стран ближнего и дальнего
зарубежья. Хотя олимпиада позиционируется как студенческая, в ней могут
принять участие
все интересующиеся криптографией -- от школьников
до профессионалов-криптографов
(разумеется, призовой зачет в этих категориях раздельный).

Для решения участникам предлагаются задачи различного уровня сложности,
в том числе и открытые математические проблемы. За первые четыре года
участникам было предложено десять задач, отмеченных организаторами как
нерешенные. Одна из них была решена участником в ходе олимпиады
(\cite{NSU:2016}),
по некоторым другим были достигнуты интересные продвижения. Эти
продвижения публикуются и обсуждаются в отчетных публикациях организаторов
(\cite{NSU:2017},\cite{NSU:2016}),
вместе с остальными решениями и итогами олимпиады.

В 2017 году в качестве задачи 1 второго раунда была предложена задача
об описании образа нелинейной функции $f(x) = x^3+x$ над конечным полем
$\F_{2^n}$ (<<The image set>>). Эта задача была отмечена как нерешенная.
В итоговом отчете (\cite{NSU:2017}) организаторы отметили продвижения,
достигнутые двумя
командами. Однако полного решения так и не было предложено. В том же
отчете было отмечено несколько ранее опубликованных результатов,
которые, по мнению организаторов, могут помочь в решении задачи.

Таким образом, насколько нам известно, данная задача считается нерешенной.
В рамках данной работы мы предлагаем полное решение указанной задачи.

Далее в тексте мы будем использовать следующие обозначения:
\begin{itemize}
\item
$\Image{(h)} = \{y \mid \exists x, h(x)=y\}$ -- образ отображения $h$
\item
$\Tr{(x)} = \sum\limits_{i=0}^{n-1} x^{2^i}$ -- след элемента
$x \in \F_{2^n}$
\item
$\gcd(g,h)$ -- наибольший общий делитель $g$ и $h$
\item
$U_d = \{x \mid x^d=1\}$ -- группа корней степени $d$ из $1$
%(в соответствующем поле)
\end{itemize}

Данная работа была проведена с помощью
Российского Научного Фонда Грант N 17-11-01377.

\section{Первичный анализ задачи}

Обозначим искомый образ $\Image{(x^3+x)}$ через $Y^{(n)}$.
Множество $Y^{(n)}$ состоит из тех и только тех точек $\xi$, для которых
уравнение $x^3+x-\xi = 0$ имеет решения в $\F_{2^n}$.

\begin{lemma}
Пусть $h(x) \in \F_{q}[x]$. Тогда уравнение $h(x) =0$
имеет решения в $\F_q$ 
% $\Image{h} = \{\beta\mid
%\exists \alpha \in \F_{q}, h(\alpha)=\beta\}$.
%Тогда $\xi \in \Image{h}$ 
в том и только том случае, когда
$\deg \gcd(h,x^q-x) > 0$.
%$\deg \gcd(h-\xi,x^q-x) > 0$.
\end{lemma}
\begin{proof}
Обозначим общий делитель 
%$\gcd(h-\xi,x^q-x)$ 
$\gcd(h,x^q-x)$ 
через $g(x)$.
%Пусть $\xi \in \Image{h}$. Тогда существует $\alpha \in \F_{q}$ такое
%что $h(\alpha)=\xi$. 
Пусть $h(x)$ имеет корень $\alpha \in \F_{2^n}$.
По теореме Безу отсюда следует, что $x-\alpha$
делит 
$h(x)$. 
%$h(x)-\xi$. 
Кроме того, $x-\alpha$ делит $x^q-x$. Следовательно,
$g(x)$ также делится на $x-\alpha$, а значит,
его степень больше нуля.

Обратно, пусть $\deg g>0$. Следовательно, он отличен от
константы. Поскольку $x^q-x$ разлагается на линейные сомножители, то и $g$
также является произведением некоторых из этих сомножителей (и их количество
больше нуля). Возьмем любой из этих делителей $x-\alpha$. По теореме Безу
получаем $g(\alpha) = 0$. Следовательно
$h(\alpha) = 0$.
%$h(\alpha) - \xi = 0$
%и $h(\alpha) = \xi$. Значит, $\xi \in \Image{h}$.
\end{proof}

Степень многочлена $f(x)-\xi = x^3+x-\xi$ равна $3$. Поэтому он заведомо
имеет не более трех корней в поле $\F_{2^n}$(с учетом возможной кратности).
Более того, общее число корней в соотвествующем поле разложения
(с учетом кратности) в точности равно трем. Все они лежат в некотором
конечном расширении $\F_{2^n}$. 
%Можно рассмотреть все варианты. 
Возможные следующие случаи:
\begin{enumerate}
\item
Многочлен $f(x)-\xi$ имеет кратные корни
\item
Все корни многочлена $f(x)-\xi$ различны, ни один из них не лежит в $\F_{2^n}$
\item
Все корни многочлена $f(x)-\xi$ различны, один из них лежит в $\F_{2^n}$
\item
Все корни многочлена $f(x)-\xi$ различны, и все три лежит в $\F_{2^n}$
\end{enumerate}
Вариант, когда все корни различны, и ровно два из них принадлежат $\F_{2^n}$,
невозможен в силу теоремы Виета.

Обозначим теперь через $Y_i^{(n)}$ множество значений $\xi$, для которых
число различных корней $f(x)-\xi$, лежащих в $\F_{2^n}$ в точности равно $i$.

Начнем с рассмотрения первого случая. Из общей теории
(например, \cite{Lidl-Niederreiter:1}) известно, что
кратные корни произвольного многочлена $h$ являются также корнями его
производной $h'$. В нашем случае производная не зависит от $\xi$ и равна
$x^2+1$. У нее имеется единственный корень $x=1$. $f(1) = 0$, поэтому
кратные корни возникают только при $\xi = 0$. Во всех остальных случаях
корни различны.

Вышеприведенное рассуждение доказывает следующий факт:
%может быть переформулировано так:
\begin{lemma}
В ранее введенных обозначениях имеют место следующие разбиения множеств:
\begin{enumerate}
\item
$\F_{2^n} \setminus \{0\} = Y_0^{(n)} \sqcup Y_1^{(n)} \sqcup Y_3^{(n)}$
\item
$Y^{(n)} = \{0\} \sqcup Y_1^{(n)} \sqcup Y_3^{(n)}$
\end{enumerate}
\end{lemma}

Таким образом, задача сводится к описанию множеств $Y_i^{(n)}$ при
$i \in \{ 0, 1, 3 \}$.

%Дальнейшие рассуждения будут опираться на результаты, приведенные в обзоре.

Имеет место следующий результат (\cite{Williams},\cite{NSU:2017}):
\begin{theorem}[K.~Williams]
\label{Th:W}
Полином $X^3+aX+b$ над $\F_{2^n}$
\begin{itemize}
\item
раскладывается в произведение трех линейных сомножителей, если
$\Tr{(a^3/b^2)}=\Tr{(1)}$ и корни квадратного уравнения $t^2+bt+a^2=0$
являются кубами в $\F_{2^n}$ (при четном $n$) или же в $\F_{2^{2n}}$
(при нечетном $n$)
\item
раскладывается в произведение одного линейного и одного квадратичного
сомножителей, если
$\Tr{(a^3/b^2)}\neq\Tr{(1)}$
\item
неприводим, если
$\Tr{(a^3/b^2)}=\Tr{(1)}$ и корни квадратного уравнения $t^2+bt+a^2=0$
не являются кубами в $\F_{2^n}$ (при четном $n$) или же в $\F_{2^{2n}}$
(при нечетном $n$)
\end{itemize}
\end{theorem}
В нашем случае требуется исследовать полином $X^3+X+\xi$. Поэтому
$\Tr{(a^3/b^2)} = \Tr{(\xi^{-2})} = \Tr{(\xi^{-1})}$.
Применяя Теорему \ref{Th:W} непосредственно к нашей задаче, получаем
следующее утверждение:
\begin{theorem}
Пусть $\xi \in \F_{2^n} \setminus \{0\}$. Тогда
\begin{itemize}
\item
$\xi \in Y_1^{(n)}$, если $\Tr{(\xi^{-1})}\neq\Tr{(1)}$
\item
$\xi \in Y_3^{(n)}$, если $\Tr{(\xi^{-1})}=\Tr{(1)}$ и
корни квадратного уравнения $t^2+\xi t+1=0$
являются кубами в $\F_{2^n}$ (при четном $n$) или же в $\F_{2^{2n}}$
(при нечетном $n$)
\item
$\xi \in Y_0^{(n)}$, если $\Tr{(\xi^{-1})}=\Tr{(1)}$ и
корни квадратного уравнения $t^2+\xi t+1=0$
не являются кубами в $\F_{2^n}$ (при четном $n$) или же в $\F_{2^{2n}}$
(при нечетном $n$)
\end{itemize}
\end{theorem}
Легко заметить, что полученная классификация допускает разбиение на
два частных случая: $n$ четно и $n$ нечетно. Рассмотрение этих случаев
по отдельности позволяет несколько упростить ситуацию.

\section{Случай четного $n$}

Итак, пусть $n$ четно. Тогда $\Tr{(1)} = 0$. Следовательно, $Y_1^{(n)}$
состоит в точности из тех $\xi$, для которых $\Tr{(\xi^{-1})} \neq 0$,
т.е. $\Tr{(\xi^{-1})} = 1$. Множество точек $\nu = \xi^{-1}$,
таким образом, образует аффинную гиперплоскость $\Tr{(\nu)} = 1$.

Рассмотрим теперь множество $Y_3^{(n)}$. Оно состоит из точек, таких что
$\Tr{(\xi^{-1})} = 0$ и при этом корни уравнения $t^2+\xi t+1=0$ являются
кубами в $\F_{2^n}$. В силу теоремы Виета корни уравнения взаимно обратны,
поэтому если один из них -- куб, то и второй тоже. Обозначим один из
корней через $r^3$. Тогда второй равен $r^{-3}$ и, в силу теоремы Виета,
$\xi = r^{3}+r^{-3}$. 

При любом выборе $r \in \F_{2^n}\setminus\{0\}$
полученное значение $\xi$ будет лежать в $Y_3^{(n)}$. 
Кроме того,
в этом случае заведомо выполнено условие $\Tr{(\xi^{-1})} = 0$.
%\todo{ТУТ УТОЧНИТЬ, ПОЧЕМУ СЛЕД ПРАВИЛЬНЫЙ!!!},
Также верно и то, что каждое значение
из $Y_3^{(n)}$ будет получено при некотором выборе $r$. Более того,
каждому значению $\xi$ будет соответствовать $6$ возможных значений:
$r$, $rw$, $rw^2$, $r^{-1}$, $r^{-1}w$, $r^{-1}w^2$, где $w$ есть
некоторый фиксированный корень уравнения $w^2+w+1=0$ (т.е. кубический
корень из $1$, не равный $1$). Очевидно, $w \in \F_{2^n}$ при четном $n$.
\begin{note}
В некоторых случаях (при
$\xi\in \{1,w,w^2\}$) некоторые из перечисленных значений могут
совпадать. В этом случае допустимых значений будет в точности $3$, что
легко проверяется прямым подсчетом.
\end{note}
Таким образом, нами построена полная параметризация для множества
$Y_3^{(n)}$ при четном $n$. Его можно несколько упростить, если заметить,
что $r^3$ является корнем степени $\frac{2^n-1}{3}$ из $1$ (и пробегает
все возможные значения из группы $U_{(2^n-1)/3}$).%$U_{\frac{2^n-1}{3}}$).

Зафиксируем итоговый результат данного раздела в виде следующего утверждения:
\begin{theorem}
\label{even:n}
Пусть $n$ четно. Тогда
$$
\Image{(x^3+x)} = Y^{(n)} = \{0\} \sqcup Y_1^{(n)} \sqcup Y_3^{(n)}
$$
где $Y_1^{(n)} = \{\xi \mid \Tr(\xi^{-1}) = 1\}$,
$Y_3^{(n)} = \{r^3+r^{-3} \mid r \in \F_{2^n}, r \neq 0\} = 
\{s+s^{-1} \mid s \in U_{(2^n-1)/3}\}$.
\end{theorem}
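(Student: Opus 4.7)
The plan is to verify each stated characterization in turn, drawing on Theorem~4 specialized to even $n$ (where $\Tr{(1)} = 0$). For $Y_1^{(n)}$ this is immediate: the condition $\Tr{(\xi^{-1})} \neq \Tr{(1)}$ from Theorem~4 collapses to $\Tr{(\xi^{-1})} = 1$, yielding the stated form directly.

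For the equality $Y_3^{(n)} = \{r^3 + r^{-3} \mid r \in \F_{2^n}, r \neq 0\}$ I would argue the two inclusions separately. The $\supseteq$ direction is routine: for $r \in \F_{2^n} \setminus \{0\}$ with $\xi := r^3 + r^{-3} \neq 0$, the characteristic-$2$ cube expansion gives $f(r + r^{-1}) = (r + r^{-1})^3 + (r + r^{-1}) = r^3 + r^{-3} = \xi$, so $y := r + r^{-1}$ is a root of $f(x) - \xi$. Because $3 \mid 2^n - 1$ for even $n$, the primitive cube root $w$ lies in $\F_{2^n}$, and the parallel substitutions $rw$ and $rw^2$ produce two further roots $rw + (rw)^{-1}$ and $rw^2 + (rw^2)^{-1}$ in $\F_{2^n}$. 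A short manipulation using $1 + w + w^2 = 0$ shows these three roots are pairwise distinct exactly when $\xi \neq 0$ (any collision would force $r^3 = 1$), so $\xi$ indeed lies in $Y_3^{(n)}$.

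The $\subseteq$ inclusion is the main obstacle, because Theorem~4 only supplies a root $t$ of $t^2 + \xi t + 1 = 0$ in $\F_{2^n}$ with no a priori reason to be a cube; this is precisely the gap glossed over in the paper's informal ``denote one of these values by $r^3$''. My plan is to sidestep this gap by working directly with a root of $f(x) - \xi$ itself. Given $\xi \in Y_3^{(n)}$, pick any root $y \in \F_{2^n} \setminus \{0\}$ and factor $f(x) - \xi = (x + y)(x^2 + y x + 1 + y^2)$; since all three roots of $f(x) - \xi$ lie in $\F_{2^n}$, the quadratic factor must split over $\F_{2^n}$. Its classical splitting criterion $\Tr{((1 + y^2)/y^2)} = 0$ simplifies, using $\Tr{(1)} = 0$ and $\Tr{(z^2)} = \Tr{(z)}$, to $\Tr{(y^{-1})} = 0$. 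Then $r^2 + y r + 1 = 0$ has a root $r \in \F_{2^n} \setminus \{0\}$ with $y = r + r^{-1}$, whence $\xi = f(y) = r^3 + r^{-3}$. The second equality with $\{s + s^{-1} \mid s \in U_{(2^n-1)/3}\}$ then follows from the standard observation that, because $3 \mid 2^n - 1$, the image of the cube map on $\F_{2^n} \setminus \{0\}$ is the unique subgroup of order $(2^n - 1)/3$, which is exactly $U_{(2^n-1)/3}$.
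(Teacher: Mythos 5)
Your argument is correct, but for the core set $Y_3^{(n)}$ it takes a genuinely different route from the paper's. The paper derives everything from the specialization of Williams' theorem: $\xi \in Y_3^{(n)}$ iff $\Tr{(\xi^{-1})}=0$ and the roots of the resolvent $t^2+\xi t+1=0$ are cubes in $\F_{2^n}$; since the product of those roots is $1$, writing one of them as $r^3$ forces the other to be $r^{-3}$, and Vieta gives $\xi=r^3+r^{-3}$, with the converse read off from the same criterion (the trace condition being automatic once the quadratic splits). So the phrase ``denote one of these roots by $r^3$'' is not actually a gap --- the theorem being invoked asserts that the roots \emph{are} cubes --- but your suspicion leads you to a nice self-contained replacement. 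Your $\supseteq$ direction exhibits the three roots $rw^i+(rw^i)^{-1}$ explicitly (using $w\in\F_{2^n}$ for even $n$), and your $\subseteq$ direction works directly with a root $y$ of $x^3+x+\xi$, the factorization $(x+y)(x^2+yx+y^2+1)$, and the quadratic trace criterion, which gives $\Tr{(y^{-1})}=0$ and hence an $r$ with $y=r+r^{-1}$ and $\xi=r^3+r^{-3}$. This buys an elementary proof of the $Y_3^{(n)}$ description that does not lean on the cited classification of cubics at all (you still need it, as you note, for $Y_1^{(n)}$, and you implicitly use the paper's earlier lemma that exactly two roots is impossible), at the cost of a little more computation. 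Both treatments share the degenerate wrinkle that $r^3=1$ yields $\xi=0\notin Y_3^{(n)}$, which you flag correctly and the paper relegates to a remark; and the final identification of $\{r^3\mid r\neq 0\}$ with $U_{(2^n-1)/3}$ is the same in both.
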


\section{Случай нечетного $n$}

Пусть теперь $n$ нечетно. Тогда $\Tr{(1)} = 1$. Следовательно, $Y_1^{(n)}$
состоит в точности из тех $\xi$, для которых $\Tr{(\xi^{-1})} \neq 1$,
т.е. $\Tr{(\xi^{-1})} = 0$. Множество точек $\nu = \xi^{-1}$ таким образом,
образует аффинную гиперплоскость $\Tr{(\nu)} = 0$ с выколотой нулевой точкой.

Рассмотрим теперь множество $Y_3^{(n)}$. Оно состоит из точек, таких что
$\Tr{(\xi^{-1})} = 1$ и при этом корни уравнения $t^2+\xi t+1=0$ являются
кубами в расширенном поле $\F_{2^{2n}}$. Как и ранее, корни уравнения взаимно
обратны. Обозначим один из них через $r^3$.
Тогда второй равен $r^{-3}$ и
$\xi = r^{3}+ r^{-3}$. 

%Все элементы $\F_{2^{2n}}^{*}$ можно разбить на три класса:
%$\{r^3\}$, $\{\alpha r^3\}$, $\{\alpha^{-1} r^3\}$, где $\alpha$ --
%примитивный элемент поля. Первый из классов содержит кубы, а второй и третий
%-- все остальные элементы. При этом если некоторый элемент принадлежит второму
%классу, то обратный к нему принадлежит третьему. Поэтому без ограничения
%общности можно считать, что один из наших корней есть $\alpha r^3$.
%Тогда второй равен $\alpha^{-1} r^{-3}$ и
%$\xi = \alpha r^{3}+ \alpha^{-1}  r^{-3}$. 

Как и раньше, каждое значение $\xi$ будет получено при некотором выборе $r$.
Однако в этом случае уже нельзя гарантировать, что при любом выборе $r$
результат попадет в основное поле. Очевидно, что вариантов выбора $r$
гораздо больше, чем вариантов выбора $\xi$, и подходящими будут далеко
не все. Для завершения решения задачи нужно научиться отличать <<подходящие>>
значения $r$ от <<неподходящих>>.

Здесь нам поможет следующее вспомогательное утверждение:
\begin{lemma}
\label{L:Frob}
Пусть $f$ -- неприводимый многочлен над конечным полем $\F_q$ и $\vartheta$
-- некоторый его корень в соответствующем алгебраическим расширении.
Тогда $\vartheta^{q}$ также является корнем $f$.
\end{lemma}
\begin{proof}
Указанное утверждение легко проверяется прямым подсчетом. В самом деле,
пусть $f(x) = \sum\limits_{i=0}^{d} a_i x^i$. Тогда
$\sum\limits_{i=0}^{d} a_i \vartheta^i = f(\vartheta) = 0$ и следовательно
$$
0 = {\left(\sum\limits_{i=0}^{d} a_i \vartheta^i \right)}^q =
\sum\limits_{i=0}^{d} a_i (\vartheta^q)^i = f(\vartheta^q)
$$
\end{proof}

Дальнейшее рассуждение несложно. Значение $r$ является <<подходящим>>
тогда и только тогда, когда $r^3$ -- корень уравнения
$t^2+\xi t + 1$. Вторым корнем является, очевидно, $r^{-3}$. 

Если $r^3 \neq 1$, то корни различны. 

С другой стороны, по Лемме \ref{L:Frob}, второй корень равен
$r^{3\cdot2^n}$. Следовательно
$r^{-3} = r^{3\cdot2^n}$ и 
$1 = r^{3 \cdot ( 2^n+1 )}$.
Таким образом, подходящее значение $r$ должно быть
корнем степени $3 \cdot ( 2^n+1 )$ из $1$
(и при этом не лежать в $\F_{2^n}$).

Пусть теперь $\alpha$ -- примитивный элемент поля $\F_{2^{2n}}$.
Тогда $r=\alpha^k$. Условие $1 = r^{3 \cdot ( 2^n+1 )}$ влечет
$3 \cdot ( 2^n+1 ) \cdot k = 0 \bmod 2^{2n}-1$, и следовательно
$3 k = 0 \bmod 2^n-1$. Поскольку $n$ нечетно, $2^n-1$ не делится на $3$.
Значит $2^n-1 \mid k$. Это означает, что $r \in U_{2^n+1}$. Ни
одно из значений $r$, кроме $1$, не лежит в $\F_{2^n}$
(иначе $r^{2^n+1}=1$ и $r^{2^n-1}=1$, что влечет $r^2=1$ и $r=1$).
Таким образом, все такие $r$ являются <<подходящими>>.

Поскольку значение $\xi$ зависит только от $r^3$, можно сразу рассматривать
$s = r^3$. Очевидно, $s^{(2^n+1)/3} = r^{2^n+1} = 1$ (при нечетном $n$
число $(2^n+1)/3$ -- целое).
%Поскольку $n$ нечетно, то отображение $x \to x^3$ биективно
%в $\F_{2^n}^*$. Следовательно, свойства $r\in \F_{2^n}^*$ и
%$s = r^3\in \F_{2^n}^*$ эквивалентны. %С другой стороны, если $s^{2^n+1}=1$,
%то кубический корень извлекается в квадратичном расширении $\F_{2^{2n}}$.

Таким образом,
$\xi = s+s^{-1}$, где $s$ -- некоторый корень из $1$ степени $(2^n+1)/3$. 
Также ясно, что одному и тому же $\xi$ будут соответствовать два значения ($s$
и $s^{-1}$), а каждому $s$ -- три кубических корня $r$.

Таким образом, нами построена полная параметризация для множества
$Y_3^{(n)}$ при нечетном $n$.

Зафиксируем итоговый результат данного раздела в виде следующего утверждения:
\begin{theorem}
\label{odd:n}
Пусть $n$ нечетно. Тогда
$$
\Image{(x^3+x)} = Y^{(n)} = \{0\} \sqcup Y_1^{(n)} \sqcup Y_3^{(n)}
$$
где $Y_1^{(n)} = \{\xi \mid \Tr(\xi^{-1}) = 0\}$,
%$Y_3^{(n)} = \{s+s^{-1} \mid s \in \F_{2^n}, s^{2^n+1}=1, s \neq 1\}$.
$Y_3^{(n)} = \{s+s^{-1} \mid s \in U_{(2^n+1)/3}, s \neq 1\}$.
\end{theorem}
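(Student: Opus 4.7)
The plan is to combine the preceding adaptation of Williams' theorem for odd~$n$ with an explicit parameterization of the preimages coming from the substitution $X = r + r^{-1}$. First, since $n$ is odd we have $\Tr{(1)} = 1$, and the first bullet of the preceding theorem reads off directly: $\xi \in Y_1^{(n)}$ if and only if $\Tr{(\xi^{-1})} \neq 1$, i.e.\ $\Tr{(\xi^{-1})} = 0$. This handles $Y_1^{(n)}$ with no further work, so the main content of the theorem is the parameterization of $Y_3^{(n)}$.

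For $Y_3^{(n)}$ I would exploit the identity $(r+r^{-1})^3 + (r+r^{-1}) = r^3 + r^{-3}$, valid in characteristic $2$. Substituting $X = r + r^{-1}$ gives $f(X) = r^3 + r^{-3}$, so finding $X \in \F_{2^n}$ with $f(X) = \xi$ amounts to finding $r$ in the algebraic closure of $\F_{2^n}$ with $r^3 + r^{-3} = \xi$ and $r + r^{-1} \in \F_{2^n}$. The first condition says that $s := r^3$ is a root of the auxiliary quadratic $t^2 + \xi t + 1 = 0$. By the preceding theorem, together with the computation $\Tr{(1/\xi^2)} = \Tr{(1/\xi)} = 1$, this quadratic is irreducible over $\F_{2^n}$ with both roots in $\F_{2^{2n}}$, so by Lemma~\ref{L:Frob} the Frobenius $s \mapsto s^{2^n}$ permutes its roots nontrivially, giving $s^{2^n} = s^{-1}$, i.e.\ $s \in U_{2^n+1}$.

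Next I would translate $r + r^{-1} \in \F_{2^n}$ into a condition on $r$ itself: a direct computation shows it holds iff $r^{2^n} = r$ or $r^{2^n} = r^{-1}$, i.e.\ iff $r \in \F_{2^n}$ or $r \in U_{2^n+1}$. The first case is impossible, since it would put $s = r^3 \in \F_{2^n}$, contradicting irreducibility of $t^2 + \xi t + 1$; hence $r \in U_{2^n+1}$. Since $n$ is odd, $3 \mid 2^n+1$, so $U_{2^n+1}$ contains a primitive cube root of unity $\omega$, and therefore all three cube roots $r, r\omega, r\omega^2$ of $s$ lie in $U_{2^n+1}$. The cube map on the cyclic group $U_{2^n+1}$ has kernel of order $3$ and image equal to the index-$3$ subgroup $U_{(2^n+1)/3}$, so the constraint $r \in U_{2^n+1}$ is equivalent to $s \in U_{(2^n+1)/3}$.

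Finally I would verify bijectivity up to the involution $s \leftrightarrow s^{-1}$: the six elements $r, r\omega, r\omega^2, r^{-1}, r^{-1}\omega, r^{-1}\omega^2$ attached to a fixed $\xi$ cube to exactly $\{s, s^{-1}\}$, and $s=1$ must be excluded because it yields $\xi = 0$, already accounted for in the $\{0\}$ summand. The main obstacle I anticipate is precisely this bookkeeping step --- tracking how the six $r$-preimages collapse onto the two cube values, and using the odd-$n$ fact $3 \mid 2^n+1$ both to place $\omega$ inside $U_{2^n+1}$ and to drop $s$ from $U_{2^n+1}$ down to $U_{(2^n+1)/3}$ --- whereas the rest is a mechanical unpacking of the Williams classification.
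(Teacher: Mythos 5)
Your proof is correct and follows essentially the same route as the paper: read off $Y_1^{(n)}$ from the Williams-based theorem, parameterize $Y_3^{(n)}$ by $\xi = r^3+r^{-3}$, and use the Frobenius lemma to force $s=r^3$ into $U_{(2^n+1)/3}$ with the map $s\mapsto s+s^{-1}$ being two-to-one away from $s=1$. The only (harmless) difference is bookkeeping: you derive $r\in U_{2^n+1}$ from the condition $r+r^{-1}\in\F_{2^n}$ together with the group structure of $U_{2^n+1}$, whereas the paper reaches the same conclusion by writing $r=\alpha^k$ for a primitive element $\alpha$ of $\F_{2^{2n}}$ and computing with exponents modulo $2^{2n}-1$.
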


Таким образом, получено описание образа $\Image{(x^3+x)}$ для
произвольного $n$.

\begin{note}
Решение задачи об описании $\Image{(x^3+x)}$ сразу приводит к решению
задачи об описании образа произвольного кубического отображения
$\Image{(ax^3+bx^2+cx+d)}$.
\end{note}
\begin{proof}
При помощи подходящей линейной замены аргумента
$z=ux+v$ можно преобразовать
выражение $ax^3+bx^2+cx+d$ к виду $a(z^3+z)+d'$, либо к виду
$az^3+d'$. Во втором случае описание образа тривиально, поэтому ограничимся
рассмотрением первого. Поскольку преобразование
биективно, $\Image{(ax^3+bx^2+cx+d)} = \Image{(a(z^3+z)+d')}$.
В свою очередь $\Image{(a(z^3+z)+d')}$ получается из образа
$\Image{(x^3+x)}$ аналогичной линейной заменой выходного значения.
\end{proof}

\section{Выводы}

В работе сформулированы и доказаны результаты, позволяющие описать строение
образа кубического отображения $x \to x^3+x$ в произвольном конечном поле
характеристики $2$. Теоремы \ref{even:n} и \ref{odd:n} дают полное описание
указанного образа. При помощи несложных преобразований указанные результаты
обобщаются на случай произвольного кубического отображения.

Полученные результаты представляют интерес в теории конечных полей,
криптографии и алгебраической теории кодирования.

%\input{trace-ideas}

%\appendix

{\center{
{\bf{ABOUT ONE PROBLEM FROM NSUCRYPTO'2017 AND THE IMAGE
OF CUBIC FUNCTION OVER BINARY FIELDS.}}

{\bf{Chilikov A. A.\footnote{
Alexey Chilikov, Ph. D., BMSTU, IU-8;  MIPT; 
Passware, Research Department;Moscow, chilikov@passware.com}}
}}

%\begin{abstract}
%
{\bf{Abstract:}} 

The description of the image of cubic function $f(x) = x^3+x$ over
finite field $\F_{2^n}$ was stated as a problem in the NSUCrypto olympiad
in 2017. This problem was marked by organizers as <<unsolved>>. In this
work we propose the full solution of this problem.
%
%\end{abstract}

{\bf{Keywords:}}  
finite fields, non-linear transformations, cryptography, coding theory.

% Created at Dec'2017
% by Alexey Chilikov 
% chilikov@passware.com

% This file contains a bibliography
% of ...

\newcommand{\by}[1]{{\it{#1}~}}
\newcommand{\paper}[1]{{\rm{#1}. }}
\newcommand{\where}[1]{{\rm{#1}. }}

\def \journ{}
\def \jour{}
\def \book{}
\def \yr{}
\def \vol{Vol}
\def \no{№}
\def \pages{p}


\begin{thebibliography}{99} %\Large \bf Литеpатуpа:}

% in Russian

\bibitem{NSU:2017}
    \by{A.~Gorodilova, S.~Agievich, C.~Carlet,
    E.~Gorkunov, V.~Idrisova, N.~Kolomeec,
    A.~Kutsenko, S.~Nikova, A.~Oblaukhov,
    S.~Picek, B.~Preneel, V.~Rijmen,
    N.~Tokareva}
%    \by{Anastasiya Gorodilova, Sergey Agievich, Claude Carlet,
%    Evgeny Gorkunov, Valeriya Idrisova, Nikolay Kolomeec,
%    Alexandr Kutsenko, Svetla Nikova, Alexey Oblaukhov,
%    Stjepan Picek, Bart Preneel, Vincent Rijmen,
%    Natalia Tokareva}
\\
    \paper{Problems and solutions of the Fourth 
    International Students' Olympiad in Cryptography NSUCRYPTO}
\\
    \where{https://arxiv.org/abs/1806.02059}
%
\bibitem{NSU:2016}
    \by{N.~Tokareva, A.~Gorodilova, S.~Agievich, V.~Idrisova, N.~Kolomeec, A.~Kutsenko, A.~Oblaukhov, G.~Shushuev}
\\
    \paper{Mathematical methods in solutions of the problems presented at the Third International Students' Olympiad in Cryptography}
\\
    \where{Прикладная дискретная математика, 2018, № 40, с. 34-58}
%
\bibitem{Williams}
    \by{Williams~K.}
\\
    \paper{Note on cubics over $GF(2^n)$ and $GF(3^n)$}
\\
    \where{Journal of Number Theory. 1975. V. 7. P. 361–365}
%
\bibitem{Lidl-Niederreiter:1}
    \by{Р.~Лидл, Г.~Нидеррайтер}
\\
    \paper{Конечные поля. Том 1}
\\
    \where{М.: Мир, 1988}



\end{thebibliography}
\end{document}